\newtheorem{Theorem}{Theorem}
\newcommand{\twofractals}[4]{%
\begin{center}
\begin{minipage}[t]{0.45\textwidth}
\begin{centering}
\includegraphics[width=0.98\textwidth]{#1}
\end{centering}
\centering{\small  #2}
\end{minipage}\hfill%
\begin{minipage}[t]{0.45\textwidth}
\begin{centering}
\includegraphics[width=0.98\textwidth]{#3}
\end{centering}
\centering{\small #4}
\end{minipage}\hfill%
\end{center}%
}
\newcommand{\imagen}[4]{%
\begin{center}
\begin{minipage}[t]{0.45\textwidth}
\begin{centering}
\includegraphics[width=0.88\textwidth]{#1}
\end{centering}
\centering{\small  #2}
\end{minipage}\hfill%
\begin{minipage}[t]{0.45\textwidth}
\begin{centering}
\includegraphics[width=0.88\textwidth]{#3}
\end{centering}
\centering{\small #4}
\end{minipage}\hfill%
\end{center}%
}
\title{A characterization of the dynamics of Schröder’s method for polynomials with two roots}
\date{November 2020}
\author[1*]{José M. Gutierrez}
\author[2]{Víctor Galilea Martín}
\affil[1]{Department of Mathematics and Computing, University of La Rioja. 53 Madre de Dios street, 26006, Logroño, Spain. E-mail:  jmguti@unirioja.es}
\affil[2]{Department of Mathematics and Computing, University of La Rioja. 53 Madre de Dios street, 26006, Logroño, Spain. E-mail: victor.galilea@unirioja.es }
\affil[*]{Corresponding author.}
\begin{document}
\maketitle
\newpage
\section{Introduction}

In his seminal paper, published in 1870, about the solution of a nonlinear equation in a single unknown, 
\begin{equation} 
f(z)=0,
\end{equation}\label{eq1}
Schr\"oder deals with the problem of characterizing general iterative algorithms to solve~\eqref{eq1} with a prefixed order of convergence $\omega\ge 2$ (see the original paper \cite{Sch} or the commented English translation \cite{Ste}). The main core of Schr\"oder's work studies two families of iterative processes, the well-known families of first and second kind \cite{Pet1}, \cite{Pet2}. The $\omega$-th member of  these families is an iterative method that converges with order $\omega$ to a solution of~\eqref{eq1}. In this way, the second method of both families is Newton's method. The third method of the first family is Chebyshev's method. The third method of the second family is Halley's method. The rest of the methods in both families (with order of convergence $\omega\ge 4$) are not so well known. 

Note that Newton's, Chebyshev's and Halley's method are also members of another famous family of iterative methods, the known as Chebyshev-Halley family of methods (introduced by Werner \cite{Wer} and reported by may other authors such as \cite{Amat} or \cite{Dub}):
 \begin{equation}\label{eq2}
z_{k+1}=z_k-\left(1-\frac{1}{2}\frac{L_f(z_k)}{1-\alpha L_f(z_k)}\frac{f(z_k)}{f'(zk+1)}\right), \quad \alpha\in \mathbb{R},\quad n\ge 0, \quad z_0\in\mathbb{C},
\end{equation}
where we have used the notation
 \begin{equation}\label{eq3}
 L_f(z)=\frac{f(z)f''(z)}{f'(z)^2}.
\end{equation}
In fact, Chebyshev's method is obtained for $\alpha=0$, Halley's method appears for $\alpha=1/2$ and Newton's method can be obtained as a limit case when $|\alpha| \to \infty$. Except for the limit case of Newton's method, all the methods in the family have third order of convergence.

In this general context of families of iterative methods for solving nonlinear equations, we would like to highlight a detail that appears in the aforementioned paper by Schr\"oder~\cite{Sch}.  Actually, in the third section of this article, Schr\"oder constructs an algorithm by applying Newton's method to the equation 
$$
\frac{f(z)}{f'(z)}=0.
$$
The resulting iterative scheme can be written as
$$
z_{k+1}=z_k-\frac{f(z_k)f'(z_k)}{f'(z_k)^2-f(z_k)f''(z_k)}, \quad n\ge 0, \quad z_0\in\mathbb{C},
$$
as it is known as \emph{Schr\"oder's method} by many authors (see for instance \cite{Proinov} or  \cite{Scavo}).

For our convenience, we denote by $S_f(z)$ the iteration function of Schr\"oder's method. Note that it can be written in terms of the function $L_f(z)$ introduced in~\eqref{eq3} in the following way
 \begin{equation}\label{eq4}
z_{k+1}=S_f(z_k)=z_k-\frac{1}{1-L_f(z_k)}\frac{f(z_k)}{f'(z_k)} \quad n\ge 0, \quad z_0\in\mathbb{C}.
\end{equation}
The same Schr\"oder compares the resulting algorithm~\eqref{eq4} with Newton's method and says: 
\begin{quote}
``It is an equally worthy algorithm which to my knowledge has not been previously considered. Besides, being almost as simple, this latter algorithm has the advantage that it converges quadratically even for multiple roots''.
\end{quote}

Curiously, Schr\"oder's method~\eqref{eq4} does not belong neither to the Schr\"oder's families of first and second kind nor the Chebyshev-Halley family~\eqref{eq2}. It has very interesting numerical properties, such as the quadratic convergence even for multiple roots, but the fact of having a high computational cost (equivalent to the the third order methods in~\eqref{eq2}) could be an important handicap for practical purposes.

In this paper we present a first  approach to the dynamical behavior of Schr\"oder's method. So we show that for polynomials with two different roots and different multiplicities, it is possible to characterize the basins of attraction and the corresponding Julia set. We can appreciate the influence of the multiplicities in such sets.

\section{Preliminaries}

In the 13th section of Schr\"oder's work~\cite{Sch}, that has the title \emph{The Principal Algorithms Appllied to very Simple Examples}, we can find the first dynamical study of a couple of rootfinding methods. Actually, Schr\"oder's considers those who, in his opinion, are the two most useful methods: Newton's method, defined by the iterative scheme
\begin{equation}\label{eq5}
z_{k+1}=N_f(z_k)=z_k-\frac{f(z_k)}{f'(z_k)} \quad n\ge 0, \quad z_0\in\mathbb{C},
\end{equation}
and the method $z_{k+1}=S_f(z_k)$ given in~\eqref{eq4}.

In the simplest case, namely equations with only one root,  we can assume without loss of generality that $f(z)=z^n$. It is easy to see that
$$
N_f(z)=\frac{n-1}{n}z,\quad S_f(z)=0.
$$
So Schr\"oder's method gives the correct root ($z=0$) of the equation in just one step, whereas Newton's method converges to this root with lineal convergence:
$$
z_k=\left(\frac{n-1}{n}\right)^k z_0.
$$
Consequently, for equations with s single root Schr\"oder concludes that the convergence regions of these two methods is the entire complex plane.

The next simple case considered by Schr\"oder is the quadratic equation. Again, without loss of generality he asumes 
$f(z)=(z-1)(z+1)$. After a series of cumbersome calculus, he estates that in this case and for both methods, the entire complex plane decomposes into two regions separated by the imaginary axis. A few years later, Cayley~\cite{Cay} addresses the same problem, only for Newton's method. In a very elegant way, Cayley proves that for polynomials
\begin{equation}\label{eq6}
f(z)=(z-a)(z-b),\quad a,b\in \mathbb{C}, \quad a\ne b,
\end{equation}
Newton's iterates converge to the root $a$ if $|z_0-a|<|z_0-b|$ and to the  root $b$ if $|z_0-b|<|z_0-a|$. The Julia set is the equidistant line between the two roots. The key to prove this result is to check that Newton iteration function~\eqref{eq5} applied to polynomials~\eqref{eq6} is conjugate via the M\"obius map
\begin{equation}\label{eq7}
M(z)=\frac{z-a}{z-b}
\end{equation}
with the function $R(z)=z^2$, that is, $R(z)=M\circ N_f\circ M^{-1}(z)$. The unit circle $S^1=\{z\in\mathbb{C}; |z|=1\}$ is invariant by $R$. Its anti-image by $R$ is the bisector between the roots $a$ and $b$.

Two functions $f,g: \mathbb{C} \to  \mathbb{C} $ are said topologically conjugate if  there exists a homeomorphism $\varphi$ such that 
$$
\varphi\circ g=f\circ \varphi.
$$
Topological conjugation is a very useful tool in dynamical systems (see \cite{Dev} for more details)
because two conjugate functions share the same dynamics properties, from the topological viewpoint. For instance, the fixed points of one function are mapped into the fixed points of the other, the periodic points of one function are mapped into the periodic points of the other function, and so on. Speaking informally, we can say that the two functions are the same from a dynamical point of view. As we have just seen, in some cases one of the functions in a conjugation could be much simpler than the other. In the case of Cayley's problem $R(z)=z^2$ is topologically conjugate (and much simpler) to
$$
N_f(z)=\frac{a b-z^2}{a+b-2 z}.
$$

In the same way, we have that Schr\"oder's method~\eqref{eq4} applied to polynomials~\eqref{eq6} 
$$
S_f(z)=\frac{z^2 (a+b)-4 a b z+a b (a+b)}{a^2-2 z (a+b)+b^2+2 z^2}
$$is conjugated with $-R(z)$ via the M\"obius map defined in~\eqref{eq7},  that is $M\circ S_f\circ M^{-1}(z)=-z^2$. Consequently, the dynamical behavior of Schr\"oder's method for quadratic polynomials mimics the behavior of Newton's method: the Julia set is the bisector between the two roots and the basins of attraction are the corresponding half-planes.

\section{Main results}

Now we consider the case of polynomials with two roots, but with different multiplicities, $m\ge n\ge 1$:
\begin{equation}\label{eq8}
f(z)=(z-a)^m(z-b)^n,\quad a,b\in \mathbb{C}, \quad a\ne b.
\end{equation}
For our simplicity, and to better appreciate symmetries, we move the roots $a$ and $b$ to $1$ and $-1$. To do this, we conjugate with the affine map
\begin{equation}\label{eq9}
A(z)=1+2\frac{z-a}{a-b}
\end{equation}
to obtain a simpler function that does not depend on the roots $a$ and $b$. Let $T_{m,n}(z)$ be the corresponding conjugate map:
\begin{equation}\label{eq10}
T_{m,n}(z)=A\circ S_f\circ A^{-1}(z)=\frac{(m-n) z^2 +2 (m+n) z +m-n}{(m+n) z^2 +2(m-n)z +m+n}.
\end{equation}
A new conjugation of $T_{m,n}(z)$ with the M\"obius map \eqref{eq7}, when $a=1$ and $b=-1$ provides us a new rational function whose dynamics are extremely simple. Actually:
\begin{equation}\label{eq11}
R_{m,n}(z)=M\circ T_{m,n}\circ M^{-1}(z)=-\frac{nz^2}{m}.
\end{equation}

Note that the circle $C_{m,n}=\{z\in\mathbb{C}; |z|=m/n\}$ is invariant by $R_{m,n}(z)$. After iteration by $R_{m,n}(z)$, the orbits of the points $z_0$ with  $|z_0|<m/n$ go to 0, whereas the orbits  of the points $z_0$ with  $|z_0|>m/n$ go to $\infty$. Consequently, $C_{m,n}$ is the Julia set of the map $R_{m,n}(z)$.

\begin{Theorem}\label{teo1}
Let $T_{m,n}(z)$ be the rational map defined by~\eqref{eq10} and let us denote by  $J_{m,n}$ its Julia set. Then we have:
\begin{enumerate}
\item   If $m=n$,  then $J_{m,m}$ is the imaginary axis.
\item   If $m>n\ge 1$, then $J_{m,n}$  is the circle
$$J_{m,n}=\left\{z\in\mathbb{C}; \left|z+\frac{m^2+n^2}{m^2-n^2}\right|=\frac{2mn}{m^2-n^2}\right\}.$$
\end{enumerate}
\end{Theorem}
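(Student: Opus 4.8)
The plan is to exploit the conjugacy chain already assembled in the excerpt together with the fact that the Julia set is carried covariantly by conjugacy. Since $R_{m,n}=M\circ T_{m,n}\circ M^{-1}$ and $M$ is a M\"obius transformation --- hence a homeomorphism (indeed a conformal automorphism) of the Riemann sphere $\widehat{\mathbb C}$ --- the Fatou/Julia partition of $T_{m,n}$ is the $M^{-1}$-image of that of $R_{m,n}$. As recorded just before the statement, the Julia set of $R_{m,n}(z)=-nz^2/m$ is the invariant circle $C_{m,n}=\{z\in\mathbb{C};\,|z|=m/n\}$, with $0$ and $\infty$ as the two attracting fixed points on either side of it. Therefore the entire problem reduces to identifying the set
$$
J_{m,n}=M^{-1}(C_{m,n}),\qquad M^{-1}(w)=\frac{1+w}{1-w}.
$$

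First I would rewrite this image set without inverting $M$ explicitly. A point $z$ lies in $M^{-1}(C_{m,n})$ precisely when $|M(z)|=m/n$, and since $M(z)=(z-1)/(z+1)$ this is the distance-ratio condition
$$
|z-1|=\frac{m}{n}\,|z+1|.
$$
In other words, $J_{m,n}$ is the locus of points whose distances to the two normalized roots $1$ and $-1$ are in the fixed ratio $m/n$. This is exactly an Apollonius set for the pair $\{1,-1\}$, and its shape is governed entirely by whether that ratio equals $1$.

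The two cases of the theorem then fall out. When $m=n$ the ratio is $1$, so the condition $|z-1|=|z+1|$ describes the perpendicular bisector of the segment joining $1$ and $-1$, which is the imaginary axis; this matches the fact that $C_{m,m}$ is the unit circle, which passes through the pole $w=1$ of $M^{-1}$ and is therefore sent to a line rather than a circle. When $m>n$ the ratio differs from $1$ and the circle $|w|=m/n>1$ avoids the pole $w=1$, so its image is a genuine circle. To pin down its center and radius I would square the distance-ratio condition and expand, writing $\rho=m/n$:
$$
(1-\rho^2)\,z\bar z-(1+\rho^2)(z+\bar z)+(1-\rho^2)=0,
$$
then divide by $1-\rho^2$ and complete the square to obtain the (real) center $c=(1+\rho^2)/(1-\rho^2)$ and radius $\sqrt{c^2-1}=2\rho/|1-\rho^2|$. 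Substituting $\rho=m/n$ and clearing denominators turns these into $c=-(m^2+n^2)/(m^2-n^2)$ and radius $2mn/(m^2-n^2)$, which is precisely the circle claimed in part (2).

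The computation is entirely elementary, so the only genuine point requiring care --- and the one I would state explicitly --- is the invariance step: that conjugation by the M\"obius map $M$ really does transport the Julia set. This rests on $M$ being a homeomorphism of $\widehat{\mathbb C}$ and on the standard fact that topological conjugacy preserves the Fatou/Julia decomposition. Everything after that is plane geometry of Apollonius circles, and the case dichotomy ($m=n$ versus $m>n$) is forced by whether the invariant circle $C_{m,n}$ meets the pole of $M^{-1}$.
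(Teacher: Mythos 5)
Your proposal is correct and follows essentially the same route as the paper's own (one-line) proof: identifying $J_{m,n}$ as the pre-image of the invariant circle $C_{m,n}$ under the M\"obius map $M$, with the case split $m=n$ versus $m>n$. You merely make explicit what the paper leaves implicit --- the transport of the Julia set under M\"obius conjugacy and the Apollonius-circle computation $|z-1|=\frac{m}{n}|z+1|$ yielding the stated center and radius --- and both details check out.
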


\begin{proof}
The proof follows immediately, just by taking into account that $J_{m,n}$ is the pre-image by $M(z)=(z-1)/(z+1)$ of the circle $C_{m,n}$ and by distinguishing the two situations indicated in the statement of the theorem.
\end{proof}

\begin{Theorem}\label{teo2}
Let $S_f(z)$ be the rational map defined by applying Schr\"oder's method to polynomials~\eqref{eq8} and let us denote by  $J_{m,n,a,b}$ its Julia set. Then we have:
\begin{enumerate}
\item   If $m=n$,  $J_{m,m,a,b}$ is the equidistant line between the points $a$ and $b$.
\item   If $m>n\ge 1$, then $J_{m,n,a,b}$  is the circle
$$J_{m,n,a,b}=\left\{z\in\mathbb{C}; \left|z+\frac{b m^2-a n^2}{m^2-n^2}\right|=\frac{mn |a-b|}{m^2-n^2}\right\}.$$
\end{enumerate}
\end{Theorem}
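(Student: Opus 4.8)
The plan is to deduce the statement directly from Theorem~\ref{teo1} by exploiting the conjugacy already recorded in~\eqref{eq10}. Indeed, $T_{m,n}=A\circ S_f\circ A^{-1}$ says that $S_f$ and $T_{m,n}$ are conjugate through the affine map $A$ of~\eqref{eq9}. Since a topological conjugacy carries the Julia set of one map onto the Julia set of the other via the conjugating homeomorphism, we have $J_{m,n}=A(J_{m,n,a,b})$, and hence
$$
J_{m,n,a,b}=A^{-1}(J_{m,n}).
$$
So the whole problem reduces to applying the single map $A^{-1}$ to the two sets produced by Theorem~\ref{teo1}.

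The key observation is that $A^{-1}$ is not an arbitrary homeomorphism but a \emph{similarity} of the plane. Solving~\eqref{eq9} gives $A^{-1}(w)=a+\tfrac{a-b}{2}(w-1)$, whose linear part $\tfrac{a-b}{2}$ has modulus $|a-b|/2$. Consequently $A^{-1}$ sends straight lines to straight lines and circles to circles, it maps the center of a circle to the image of that center, it multiplies radii by the factor $|a-b|/2$, and it preserves perpendicular bisectors. I will also use the normalizing identities $A^{-1}(1)=a$ and $A^{-1}(-1)=b$, which are immediate from~\eqref{eq9}.

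For the case $m=n$, Theorem~\ref{teo1} gives that $J_{m,m}$ is the imaginary axis, which is precisely the perpendicular bisector of the segment joining $-1$ and $1$. Applying the similarity $A^{-1}$, and using $A^{-1}(\pm 1)=a,b$ together with the fact that similarities preserve equidistance, the image is the perpendicular bisector of the segment joining $a$ and $b$, i.e.\ the equidistant line between the two roots, as claimed.

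For the case $m>n$, Theorem~\ref{teo1} describes $J_{m,n}$ as the circle of center $c=-\tfrac{m^2+n^2}{m^2-n^2}$ and radius $r=\tfrac{2mn}{m^2-n^2}$. Its image $A^{-1}(J_{m,n})$ is therefore again a circle; its radius is $\tfrac{|a-b|}{2}\,r=\tfrac{mn|a-b|}{m^2-n^2}$, which already matches the statement, and its center is $A^{-1}(c)$. The only real work left is to substitute $c$ into $A^{-1}(w)=a+\tfrac{a-b}{2}(w-1)$ and simplify to recover the center of the claimed circle. I expect the main (though routine) obstacle to be tracking signs carefully in this last simplification; a convenient consistency check is to set $a=1,\ b=-1$, for which $A$ collapses to the identity and the resulting center must reproduce exactly the center $-\tfrac{m^2+n^2}{m^2-n^2}$ of $J_{m,n}$ from Theorem~\ref{teo1}.
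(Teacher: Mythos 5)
Your proposal takes exactly the paper's route: the paper's own proof consists precisely of observing that, by the conjugacy~\eqref{eq10}, $J_{m,n,a,b}$ is the pre-image of $J_{m,n}$ under the affine map $A$ of~\eqref{eq9}, treated in the two cases of Theorem~\ref{teo1}. Your elaboration via the similarity properties of $A^{-1}(w)=a+\tfrac{a-b}{2}(w-1)$ (lines to lines, circles to circles, radii scaled by $|a-b|/2$, $A^{-1}(\pm 1)=a,b$) is just the computation the paper leaves implicit, and the case $m=n$ and the radius in the case $m>n$ are handled correctly.

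One substantive point: if you finish the substitution you deferred, you obtain
$A^{-1}\left(-\tfrac{m^2+n^2}{m^2-n^2}\right)=a+\tfrac{a-b}{2}\left(-\tfrac{2m^2}{m^2-n^2}\right)=\tfrac{bm^2-an^2}{m^2-n^2}$,
so the Julia set is $\left\{z\in\mathbb{C};\ \left|z-\tfrac{bm^2-an^2}{m^2-n^2}\right|=\tfrac{mn|a-b|}{m^2-n^2}\right\}$, whose center is the \emph{negative} of the one printed in Theorem~\ref{teo2}. Your own consistency check detects this: at $a=1$, $b=-1$ the printed formula gives center $+\tfrac{m^2+n^2}{m^2-n^2}$, whereas Theorem~\ref{teo1} requires $-\tfrac{m^2+n^2}{m^2-n^2}$. (A second check: as $m/n\to\infty$ the circle must collapse onto the root $b$ of smaller multiplicity, which $\tfrac{bm^2-an^2}{m^2-n^2}\to b$ does, while the printed center tends to $-b$.) So the discrepancy is a sign typo in the paper's statement, not a flaw in your argument; the theorem should read $\left|z+\tfrac{an^2-bm^2}{m^2-n^2}\right|=\tfrac{mn|a-b|}{m^2-n^2}$. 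Do carry out that last substitution explicitly rather than leaving it as ``routine,'' since it is exactly where the statement and the correct computation part ways.
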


\begin{proof}
Now we deduce this result  by calculating the pre-images of $J_{m,n}$ by the affine map $A(z)$ defined in~\eqref{eq9}  in the two situations indicated in the previous theorem.
\end{proof}

\section{Conclusions and further work}

We have studied the behavior of Schr\"oder's method for polynomials with two different complex roots and with different multiplicities~\ref{eq8}. Actually, we have proved that the Julia set of the corresponding rational functions obtained in this case is a circle given in Theorem~\ref{teo2}.

In addition, Theorem~\ref{teo1} gives us a universal result that characterizes the behavior of Schr\"oder's method in a very simplified form, depending only of the values of the multiplicities $m$ and $n$. The influence of the roots $a$ and $b$ is revealed in Theorem~\ref{teo2}, and is just an affine transformation of the situation given in Theorem~\ref{teo1}.

Let us consider the points $(x,y)\in\mathbb{R}^2$ given by the centers and radius of the circles defined in Theorem~\ref{teo1}, that is
$$
(x,y)=\left( \frac{m^2+n^2}{m^2-n^2}, \frac{2mn}{m^2-n^2}\right).
$$
These points belong to the hyperbola $x^2-y^2=1$ in the real plane $\mathbb{R}^2$.

In addition, we appreciate that are polynomials for which Schr\"oder's method  has the same dynamical behavior. Actually, if we introduce the new parameter
$$
p=\frac{m}{n},
$$
we have that the circles $J_{m,n}$ defined in Theorem~\ref{teo1} can be expressed as
$$J_{p}=\left\{z\in\mathbb{C}; \left|z+\frac{p^2+1}{p^2-1}\right|=\frac{2p}{p^2-1}\right\}.$$
Therefore Schr\"oder's method applied to polynomials with couples of multiplicities $(m,n)$ having the same quotient $p$ have the same Julia set.

We can schematize of the dynamics of Schr\"oder's method applied to polynomials $(z-1)^m(z+1)^n$, $m>n$ in the following way:
\begin{itemize}
\item When $p=m/n\to \infty$, the Julia sets $J_p$ are circles that tends to collapse in the point $z=-1$.
\item When $p=m/n\to 1^+$, the Julia sets $J_p$ are circles with centers in the negative real line. Note
that centers
$$
-\frac{p^2+1}{p^2-1}\to -\infty \text{ when } p\to 1^+
$$
and radius
$$
\frac{2p}{p^2-1}\to \infty \text{ when } p\to 1^+.
$$
So when $p\to 1^+$ the Julia set are circles getting bigger and tending to ``explode'' into the limit case, given by the imaginary axis when $p=1$.
\end{itemize}

If we consider the presence of the roots $a$ and $b$, the dynamics of Schr\"oder's method applied to polynomials $(z-a)^m(z-b)^n$, $m>n$ can be summarized in a ``travel'' from a circle concentrated in the root with the smallest multiplicity, $b$ to circles with the center in the line connecting the roots $a$ and $b$ and radius tending to infinity until the ``explosion'' into the limit case, given by the bisector of the two roots,  when $p=1$.

In Figures~\ref{fig1}--\ref{fig3} we show some graphics of different Julia sets obtained when Schr\"oder's method is applied to polynomials $(z-a)^m(z-b)^n$, $m\ge n\ge 1$. We compare these dynamical planes with the ones obtained for Newton's.   For instance, in Figure~\ref{fig1} we show the behavior when $p=m/n$ is increasing. We appreciate how the Julia set for Schr\"oder's method (a circle) tends to to collapse in the point $z=-1$  that in this case is the simple root. In the case of Newton's method, the Julia set is a kind of ``deformed parabola'', whose ``axis of symmetry''  is the real line, it is open to the left, the ``vertex'' tends to the simple root $z=-1$ and the ``latus rectum''  tends to zero. We see how the basin of attraction of the multiple root $z=1$ invades more and more the basin of the simple root $z=-1$, as it was pointed out by Hern\'andez-Paricio et al. \cite{Gut}.

In Figure~\ref{fig2} we see what happens when $p=m/n\approx 1$. The Julia set for Schr\"oder's method are circles getting bigger as $p$ approaches the value 1 and exploding into a half-plane limited by the imaginary axis when $p=1$. In the case of Newton's method, the Julia set is again a ``deformed parabola'' with  the real line as   ``axis of symmetry''  and open to the left. However as $p$ goes to 1, the ``vertex'' tends to  $z=0$ and the ``latus rectum''  tends to infinity. As a limit case, when $p=1$ this ``deformed parabola'' becomes a straight line, actually the imaginary axis.

Figure~\ref{fig3} shows the circle corresponding to the Julia of Schr\"oder's method applied to polynomials $(z-1)^m(z+1)^n$ with $p=m/n=2$. We can also see the Julia set of Newton's method applied to such polynomials. In the case of Newton's method we observe that the behavior is not the same fo values of $m$ and $n$ such that $p=m/n=2$. The corresponding ``deformed parabola'' tends to be smoother when the values of $m$ and $n$ increase.

Finally in Figure~\ref{fig4} we show the Julia set $J_{m,n,a,b}$ defined in Theorem~\ref{teo2} in the case $m=2$, $n=1$, $a=1$, $b=i$ together with the corresponding Julia set for Newton's method. In these figures we appreciate the loss of symmetry respect to the imaginary axis. This role is now played by the equidistant line between the roots $a$ and $b$.

As a further work, we would like to explore the  influence of the multiplicity in the Julia set of Newton's method applied to polynomials $(z-a)^m(z-b)^n$, $m\ge n\ge 1$ and its possible relationship between the study of Schr\"oder's method. In particular, we are interested in characterize the main properties of the ``deformed parabolas'' that appear in the case of Newton's method.

\begin{figure}[H]
\centering 
\twofractals{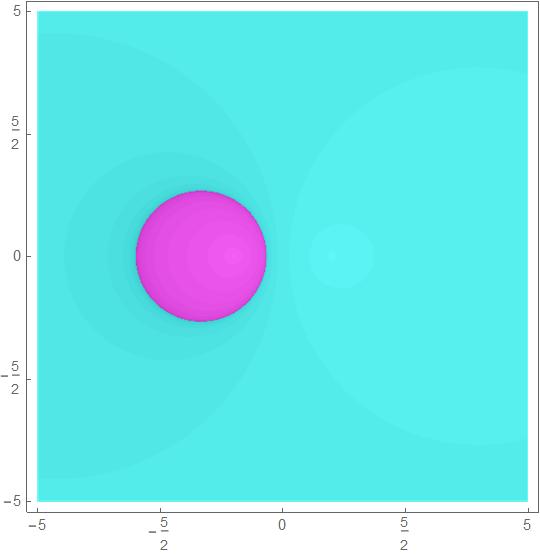}{Schr\"oder $m=2, \, n=1.$}{imagenes/newton_m_2n_1}{Newton $m=2, \, n=1.$}

\twofractals{imagenes/sch_m_5n_1}{Schr\"oder $m=5, \, n=1.$}{imagenes/newton_m_5n_1}{Newton $m=5, \, n=1.$}

\twofractals{imagenes/sch_m_8n_1}{Schr\"oder $m=8, \, n=1.$}{imagenes/newton_m_8n_1}{Newton $m=8, \, n=1.$}
\caption{Basins of attraction of Schr\"oder's  and Newton's methods applied to polynomials $(z-1)^n(z+1)^m$ for $n=1$, $m=2, 5, 8$.}
\label{fig1}
\end{figure}  

\begin{figure}[H]
\centering 
\twofractals{imagenes/sch_n_6_m_6}{Schr\"oder $m=6, \, n=6.$}{imagenes/newton_n_6_m_6}{Newton $m=6, \, n=6.$}

\twofractals{imagenes/sch_m_7n_6}{Schr\"oder $m=7, \, n=6.$}{imagenes/newton_m_7n_6}{Newton $m=7, \, n=6.$}

\twofractals{imagenes/sch_m_8n_6}{Schr\"oder $m=8, \, n=6.$}{imagenes/newton_m_8n_6}{Newton $m=8, \, n=6.$}
\caption{Basins of attraction of Schr\"oder's  and Newton's methods applied to polynomials $(z-1)^n(z+1)^m$ for $n=6$, $m=6, 7, 8$.}
\label{fig2}
\end{figure}

\begin{figure}[H]
\centering 
\imagen{imagenes/sch_m_4n_2}{Schr\"oder $p=m/n=2.$}{imagenes/newton_m_4n_2}{Newton $m=4, \, n=2.$}

\imagen{imagenes/newton_m_6n_3}{Newton $m=6, \, n=3.$}{imagenes/newton_m_8n_4}{Newton $m=8, \, n=4.$}

\caption{The first graphic shows the basin of attraction of Schr\"oder's method applied to polynomials $(z-1)^n(z+1)^m$ with $p=m/n=2$. The other graphics show the basins of attraction of  Newton's method applied to the same polynomials  for different values of $m$ and $n$ with $p=m/n=2$.}
\label{fig3}
\end{figure}  

\begin{figure}[H]
\centering 
\includegraphics[width=0.4\textwidth]{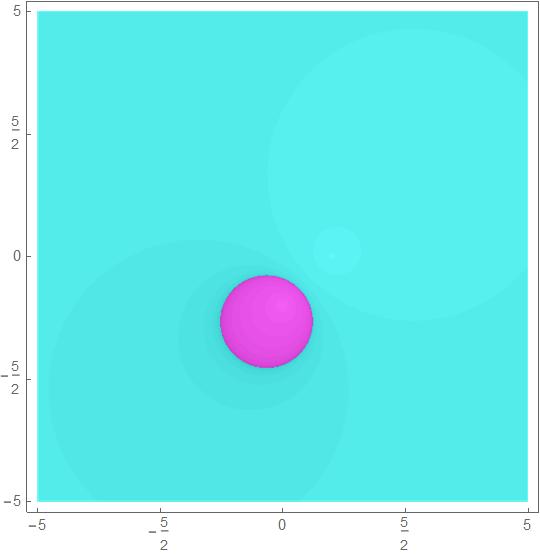}\quad 
\includegraphics[width=0.4\textwidth]{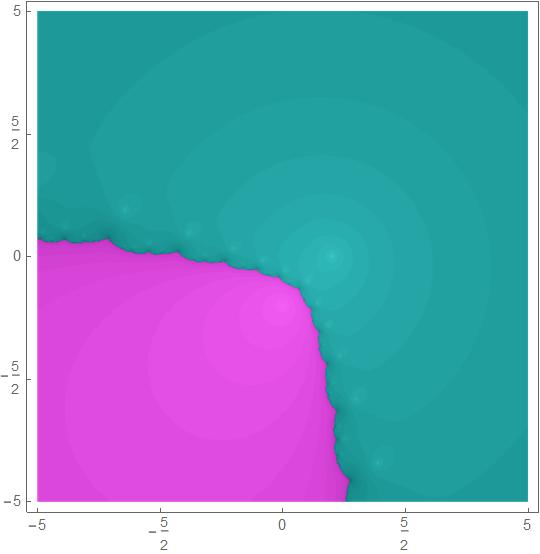}


\caption{Basin of attraction of Schr\"oder's and Newton's methods applied to polynomials $(z-1)^2(z+i)$.}
\label{fig4}
\end{figure}  
\newpage


\end{document}